\newtheorem{theorem}{\rm\bf Theorem}[section]
\newtheorem{proposition}[theorem]{\rm\bf Proposition}
\newtheorem{corollary}[theorem]{\rm\bf Corollary}
\newtheorem*{theorem*}{Theorem}
\newtheorem*{theorem 1}{\rm\bf Proposition 1}
\newtheorem*{theorem 2}{\rm\bf Proposition 2}
\theoremstyle{definition}
\newtheorem{definition}[theorem]{\rm\bf Definition}
\theoremstyle{remark}
\newtheorem{remark}[theorem]{\rm\bf Remark}
\def\half#1#2{\begin{matrix}\frac{#1}{#2}\end{matrix}}
\def\R#1{\mathbb{R}^{#1}}
\def\scal#1#2{\langle #1,\, #2 \rangle}
\def\field{k}
\DeclareMathOperator{\Idm}{Idm}
\DeclareMathOperator{\trace}{tr}
\begin{document}

\title{On an extremal property of  Jordan algebras of Clifford type}

\author{Vladimir G. Tkachev}
\address{Department of Mathematics, Link\"oping University\\ Link\"oping, 58183, Sweden, vladimir.tkatjev@liu.se}

\begin{abstract}
If $V$ is a finite-dimensional unital  commutative (maybe nonassociative) algebra carrying an associative positive definite bilinear form $\scal{}{}$ then there exist a nonzero idempotent $c\ne e$ ($e$ being the algebra unit) the shortest possible length $|c|^2:=\scal{c}{c}$. In particular,  $|c|^2\le \half12|e|^2$. We prove that the equality holds exactly when $V$ is  a Jordan algebra of Clifford type.
\end{abstract}

\keywords{
Commutative nonassociative algebras; Associative bilinear form; Idempotents; Jordan algebras; Jordan algebras of Clifford type
}
\subjclass[2000]{
Primary 17A99, 17C27}

\maketitle

\section{Introduction}
Throughout this paper, $V$ denotes a finite-dimensional commutative nonassociative algebra over $\R{}$ carrying an associative nonsingular bilinear form:
\begin{equation}\label{Qass}
\scal{xy}{z}=\scal{x}{yz}, \quad \forall x,y,z\in V,
\end{equation}
where $(x,y)\to xy=yx$ is the algebra multiplication.
Following M.~Bordemann \cite{Bordemann}, an algebra satisfying \eqref{Qass} is called \textit{metrised}. We shall always assume that $V$ is a \textit{Euclidean} metrised algebra, i.e. the associative bilinear form $\scal{\cdot}{\cdot}$ is positive definite. The classical example  is formal real (Euclidean) Jordan algebras with the invariant trace form \cite{FKbook}, \cite{Koecher}. Another important example is the Griess algebra $\mathfrak{G}$  appearing in connection with the Monster sporadic simple group \cite{Norton94} or, in general, any axial algebra \cite{HRS15}, \cite{Rehren17}, \cite{HRS15}, \cite{Ivanov15}. A more recent example which also appears in this context is the class of  nonassociative algebras decoding the geometric structure  of cubic minimal cones  and  cubic polynomial solutions to certain elliptic PDEs \cite[Chapter~6]{NTVbook}, \cite{NTV}, \cite{Tk14}, \cite{Tk15b}. Some related questions as well as geometry of idempotents are also discussed in \cite{RowenSegev}, \cite{HRS15b}, \cite{HSS17}, \cite{Castillo-Ramirez}.

It is known \cite{Tk15b}, \cite{KrTk18a} (see also \Cref{pro:main} below) that if $V$  is a Euclidean metrised algebra then the set of nonzero idempotents $\Idm(V)$ is nonempty and there exists an idempotent $c\ne0$ such that
\begin{equation}\label{cc1}
|c|^2\le |c'|^2,\qquad \forall c'\in \Idm(V),
\end{equation}
where $|x|^2=\scal{x}{x}$. Such an idempotent is called {\textit{extremal}}, denoted by $c\in \Idm_1(V)$. In other words, $\Idm_1(V)$ denotes the set of shortest nonzero idempotents in $V$.

If  additionally $V$ is a \textit{unital} algebra with the unit $e$ then the conjugation $c\to \bar c:=e-c$ is a natural involution on   $\Idm(V)$, indeed,
\begin{equation}\label{barc}
\bar c^2=e-2ec+c=e-c=\bar c,
\end{equation}
and similarly
$c\bar c=0.$
Therefore, using \eqref{Qass} one obtains
\begin{equation}\label{orthc}
\scal{c}{\bar c}=\scal{c^2}{\bar c}=\scal{c}{c\bar c}=0,
\end{equation}
i.e. $c$ and $\bar c$ are orthogonal. This together with \eqref{cc1} immediately yields
\begin{equation}\label{cce}
|e|^2=|c|^2+|\bar c|^2\ge 2|c|^2, \qquad \forall c\in \Idm_1(V).
\end{equation}

\begin{definition}\label{def:1}
A unital Euclidean metrised algebra such that the equality in \eqref{cce} attains for some $c\in \Idm_1(V)$ is said to be \textit{minimal}.
\end{definition}

It is the main purpose of the present paper to completely characterize the class of minimal algebras. Let us recall some standard definitions, see \cite[p.~13-14]{JacobsonBook}, \cite{FKbook}, \cite{McCrbook}. Let $U$ be a vector space over a field $\field$ of characteristic $\ne 2$, $f(u,v)$ be a nonsingular symmetric bilinear form from $U$ to $\field$. Then $\field \oplus U$ together with the multiplication law
\begin{equation}\label{jordanmul}
(a\oplus u)\bullet (b\oplus v)=(ab+f(u,v))\oplus (av+bu)
\end{equation}
is a Jordan algebra, called the Jordan algebra of bilinear form $f$ and denoted by $U_f$ (also known as Jordan algebra of Clifford type or a spin-factor \cite{McCrbook}). Then  $\epsilon=1\oplus 0$ is the unit in $U_f$ and any element  $z=a\oplus u \in U_f$ satisfies the quadratic relation
\begin{equation}\label{uquadrat}
z^2-z\trace z+d(z)\epsilon=0,
\end{equation}
where $\trace z=2a$ is the generic trace and $d(z)=a^2-f(u,u)$ is the generic norm of $z$. It is well-known that the trace form $t(z,w):=\trace (z\bullet w)$ is associative in the sense of \eqref{Qass}. In particular,  $U_f$ is a metrised algebra with respect to the trace form.

Our main result states that any minimal algebra is a  Jordan algebra of Clifford type. More precisely, we shall prove

\begin{theorem}\label{th:main}
If $V$ is a minimal algebra then
\begin{equation}\label{thspann}
V=\mathrm{span}(\Idm_1(V)).
\end{equation} Furthermore,  $V$ is isomorphic to the Jordan algebra $e^\bot_f$ of the symmetric bilinear form
$$
f(x,y)=\frac{1}{|e|^2}\scal{x}{y},
$$
where $e^\bot=\{x\in V:\scal{x}{e}=0\}$.
\end{theorem}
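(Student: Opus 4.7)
The plan is to progressively unpack the minimality condition, starting from the canonical form of a single extremal idempotent, establishing its Peirce decomposition, and finally extracting the spin-factor multiplication on the whole algebra.

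\textbf{Canonical form.} The equality $|e|^2=2|c|^2$ together with \eqref{orthc} forces $|\bar c|^2=|c|^2=\tfrac{1}{2}|e|^2$, so $\bar c$ is also extremal. Writing $c=\alpha e+v_0$ with $v_0\in e^\bot$, the identity $\alpha|e|^2=\scal{c}{e}=\scal{c^2}{e}=\scal{c}{c}=\tfrac{1}{2}|e|^2$ gives $\alpha=\tfrac{1}{2}$; expanding $c^2=c$ then yields $v_0^2=\tfrac{1}{4}e$ and $|v_0|^2=\scal{v_0^2}{e}=\tfrac{1}{4}|e|^2$. Conversely, every $v\in e^\bot$ with $v^2=\tfrac{1}{4}e$ produces an extremal $\tfrac{1}{2}e+v$, so $\Idm_1(V)$ is parametrized by the real-algebraic set $\mathcal{S}:=\{v\in e^\bot:v^2=\tfrac{1}{4}e\}$.

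\textbf{Peirce decomposition.} Set $V_\lambda(c):=\{x\in V:cx=\lambda x\}$; by self-adjointness of $L_c$ (from \eqref{Qass}), $V$ decomposes orthogonally as $\bigoplus_\lambda V_\lambda(c)$. I aim to show $V=\mathbb{R}c\oplus V_{1/2}(c)\oplus\mathbb{R}\bar c$. Primitivity $V_1(c)=\mathbb{R}c$ follows from \Cref{pro:main}, and the symmetric role of $\bar c$ gives $V_0(c)=\mathbb{R}\bar c$. For the spectral restriction I perturb $c$ along $\alpha\in V_{1/2}(c)$ by a path $c(t)$ in $\Idm(V)$, expand $c(t)^2=c(t)$ Peirce-componentwise, and compute through order $t^4$:
\begin{equation*}
|c(t)|^2-|c|^2=t^4\sum_{\lambda\ne 1/2}\frac{|(\alpha^2)_\lambda|^2}{1-2\lambda}+O(t^5).
\end{equation*}
Extremality of $c$ forces the sum to be nonnegative, while the analogous calculation at $\bar c$ (using $V_\lambda(\bar c)=V_{1-\lambda}(c)$, which replaces $1-2\lambda$ by $2\lambda-1$) forces it to be nonpositive, so it vanishes. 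The $\lambda\in\{0,1\}$ contributions cancel explicitly via $|(\alpha^2)_0|^2=|(\alpha^2)_1|^2=|\alpha|^4/(2|e|^2)$. Combining this with the spectral bound from \cite{KrTk18a} and the self-adjointness identity $V_{1/2}(c)=\ker L_{v_0}\subseteq v_0^\bot$ (since $v_0=L_{v_0}e\in\mathrm{im}\,L_{v_0}$ and $\ker L_{v_0}\perp\mathrm{im}\,L_{v_0}$) then forces all remaining $(\alpha^2)_\lambda$ to vanish, leaving only $\lambda\in\{0,\tfrac{1}{2},1\}$.

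\textbf{Spin-factor multiplication and span.} For $\alpha,\beta\in V_{1/2}(c)$, metric associativity yields $\scal{\alpha\beta}{c}=\tfrac{1}{2}\scal{\alpha}{\beta}=\scal{\alpha\beta}{\bar c}$, so the $V_1$- and $V_0$-components of $\alpha\beta$ sum to $f(\alpha,\beta)e$ with $f=\scal{\cdot}{\cdot}/|e|^2$. Its $V_{1/2}$-component vanishes: since $v_0$ is a smooth point of $\mathcal{S}$ with tangent space $V_{1/2}(c)$, a smooth path $v(t)\in\mathcal{S}$ with $v(0)=v_0$, $v'(0)=\alpha$ satisfies $\alpha^2+2v_0p=0$ at second order (with $p=v''(0)\in e^\bot$); projecting onto $V_{1/2}(c)$ gives $(\alpha^2)_{1/2}=0$, and polarization yields $(\alpha\beta)_{1/2}=0$. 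Hence $\alpha\beta=f(\alpha,\beta)e$. Combined with $v_0^2=f(v_0,v_0)e$ and $v_0\alpha=(c-\tfrac{1}{2}e)\alpha=0=f(v_0,\alpha)e$, bilinearity extends this to $vw=f(v,w)e$ for all $v,w\in e^\bot$, matching \eqref{jordanmul}; hence $V\cong e^\bot_f$. In this spin factor every $v\in e^\bot$ with $|v|^2=\tfrac{1}{4}|e|^2$ then yields an extremal $\tfrac{1}{2}e+v$, so $\{c-\tfrac{1}{2}e:c\in\Idm_1(V)\}$ covers the full sphere of radius $\tfrac{1}{2}|e|$ in $e^\bot$ and hence spans it; together with $e=c+\bar c\in\mathrm{span}(\Idm_1(V))$ this gives \eqref{thspann}.

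The principal obstacle is the second step: the fourth-order inequalities at $c$ and $\bar c$ make the total sum vanish but do not in themselves isolate each non-standard Peirce contribution; this isolation requires both the auxiliary spectral bound from \cite{KrTk18a} and the tangent-space geometry of $\mathcal{S}$, after which the remaining steps are direct computations with metric associativity.
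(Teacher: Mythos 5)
Your outline (Peirce-decompose a single extremal idempotent, then read off the spin-factor multiplication) is a legitimately different route from the paper's (which instead derives the product formula \eqref{prod1} for pairs of extremal idempotents, proves $V=\mathrm{span}(\Idm_1(V))$ by contradiction, and then computes in a basis of idempotents), but as written it has a genuine gap at its central point. Twice you assume that the idempotents form a smooth family through $c$ with tangent space $V_{1/2}(c)$: once to run the fourth-order expansion of $|c(t)|^2$, and again --- decisively --- to conclude $(\alpha^2)_{1/2}=0$ from a curve $v(t)\in\mathcal{S}$ with $v'(0)=\alpha$. Neither the existence of such curves nor the smoothness of $\mathcal{S}=\{v\in e^\bot: v^2=\half14 e\}$ at $v_0$ is justified, and it cannot come from the implicit function theorem: the differential of $x\mapsto x^2-x$ at $c$ is $2L_c-1$, which vanishes exactly on the directions $V_{1/2}(c)$ you want to move in (indeed, already the order-$t^2$ equation $(1-2L_c)\beta=\alpha^2$ is solvable only if $(\alpha^2)_{1/2}=0$, the very fact you are trying to prove). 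Since "idempotents form a sphere-sized smooth family" is essentially the spin-factor conclusion, assuming these curves exist begs the question. The appeal to an unspecified "spectral bound from \cite{KrTk18a}" is likewise not a checkable step, and you concede yourself that the fourth-order inequalities alone do not isolate the unwanted Peirce components.

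Two further remarks. First, all of your step 2 is obtainable in two lines from tools you already quote: in a minimal algebra $\bar c$ is also extremal, so \Cref{pro:main} gives $L_c\le\half12$ on $c^\bot$ and $L_{\bar c}=1-L_c\le\half12$ on $\bar c^\bot$, whence $L_c=\half12$ on $c^\bot\cap\bar c^\bot$ and $V=\R{}c\oplus V_{1/2}(c)\oplus\R{}\bar c$; this is exactly \Cref{pro:norm2}, and no perturbation expansion is needed. Second, even granting that, the real difficulty is precisely the $V_{1/2}$-component of products $\alpha\beta$ with $\alpha,\beta\in V_{1/2}(c)$ (your $V_1$/$V_0$ computation via associativity is correct). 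The paper closes this hole not by differential-geometric arguments but by \Cref{th:span}: if some $z\ne0$ were orthogonal to $\mathrm{span}(\Idm_1(V))$, then \Cref{cor:w} produces $z$ with $z^2=\half12|z|^2e+\lambda z$, from which an idempotent $p=ae+bz\ne e$ is constructed explicitly, contradicting \Cref{pro:norm2}. Your proposal has no counterpart to this argument, and without one (or an honest proof of the smoothness of the idempotent variety), the claim $\alpha\beta=f(\alpha,\beta)e$, and hence the isomorphism $V\cong e^\bot_f$ and the span statement \eqref{thspann}, remain unproved.
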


The paper is organized as follows. In \Cref{sec:extr} we give a short overview of metrised algebras and discuss variational properties of extremal idempotents in more details. The spectral inequality \eqref{extremal} in \Cref{pro:main} and \Cref{pro:norm2} are key ingredients in the classification of minimal algebras. The proof of Theorem~\ref{th:main} is given in \Cref{sec:last}.



\section{Extremal idempotents}\label{sec:extr}

\subsection{Euclidean metrised algebras}
Let $V$ be a finite dimensional algebra over $\R{}$ with multiplication denoted by juxtaposition $(x,y)\to xy\in V.$ A symmetric $\R{}$-bilinear form $\scal{x}{y}:V\times V\to \R{}$ is called nonsingular if $\scal{x}{y}=0$ for all $y\in V$ implies $x=0$. In what follows, we use the standard squared norm notation $$|x|^2=\scal{x}{x}.
$$
The bilinear form $\scal{\cdot}{\cdot}$ is called {associative} \cite{Schafer}, \cite{Okubo81a}  if \eqref{Qass} holds.
An algebra carrying  a non-singular symmetric bilinear form is called {metrised}, cf.  \cite{Bordemann}.

The operator of left ($=$right) multiplication is denoted by $L_x:y\to xy$. If algebra $V$ is metrised then $L_c$ is  self-adjoint:
\begin{equation}\label{selff}
\scal{L_xy}{z}=\scal{y}{L_xz}, \qquad \forall x,y,z\in V.
\end{equation}

We recall the following result, see also \cite{Tk15b}, \cite{Tk18a}.

\begin{proposition}\label{pro:main}
Let $(V,\scal{\cdot}{\cdot})$ be a nonzero Euclidean metrised algebra (i.e. $VV\ne 0$). Then the set $E$ of constrained stationary points of the variational problem
\begin{equation}\label{variational}
\scal{x}{x^2}\to \max\quad \text{ subject to a constraint}\quad \scal{x}{x}=1
\end{equation}
is nonempty and the maximum is attained. Denote by $E_0\subset E$ the (nonempty) set of local maxima in \eqref{variational}. Then for any $x\in E$, either $x^2=0$ or $c:=x/\scal{x^2}{x}$ is an idempotent in $V$. If additionally $x\in E_0$ then $\scal{x^2}{x}>0$ and the corresponding idempotent  $c=x/\scal{x^2}{x}$ satisfies the extremal property
\begin{equation}\label{extremal}
L_c\le \half12 \text{ on }c^\bot:=\{x\in V:\scal{x}{c}=0\}.\end{equation}
In particular, the eigenvalue $1$ of $L_c $ is simple.
\end{proposition}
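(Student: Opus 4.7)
The plan is to treat \eqref{variational} as a Lagrange problem for the cubic $f(x):=\langle x^2,x\rangle$ on the compact unit sphere $S=\{x\in V:|x|^2=1\}$ and to extract the algebraic content from the first-- and second--order optimality conditions. Associativity and commutativity make the trilinear form $T(x,y,z):=\langle xy,z\rangle$ totally symmetric, with $f(x)=T(x,x,x)$; continuity and compactness then give $E\ne\varnothing$ and the existence of a maximizer. The hypothesis $VV\ne 0$ produces $a,b\in V$ with $ab\ne 0$, whence $T(a,b,ab)=|ab|^2>0$, so $f\not\equiv 0$; because $f(-x)=-f(x)$, one in fact has $\max_S f>0$.

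A direct cubic expansion gives
$$
f(x+th)=f(x)+3t\langle x^2,h\rangle+3t^2\langle xh,h\rangle+t^3f(h),
$$
so that $\nabla f(x)=3x^2$ while $\nabla|x|^2=2x$. The Lagrange equation $3x^2=2\lambda x$ becomes $x^2=\mu x$ with $\mu:=\tfrac{2}{3}\lambda$; pairing with $x$ on $S$ gives $\mu=f(x)$. So for $x\in E$ either $x^2=0$ (case $\mu=0$), or $c:=x/\mu$ satisfies $c^2=x^2/\mu^2=x/\mu=c$ and hence belongs to $\Idm(V)$. This settles the portion of the statement concerning $E$.

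For $x\in E_0$ I would read off the Hessian from the same expansion: $f''(x)[h,h]=6\langle L_xh,h\rangle$ versus $(|x|^2)''[h,h]=2|h|^2$. The constrained second--order condition on the tangent space $T_xS=x^\bot$ then reads
$$
\langle L_xh,h\rangle\le\tfrac{\mu}{2}|h|^2,\qquad h\in x^\bot.
$$
Combining this with $\mu>0$ (see below) and substituting $L_x=\mu L_c$ together with $x^\bot=c^\bot$ gives, after division by $\mu$, exactly $L_c\le\tfrac{1}{2}$ on $c^\bot$, which is \eqref{extremal}. Since $L_c c=c^2=c$ produces the eigenvalue $1$ in the direction $\mathbb{R}c$ while the strict bound $\tfrac{1}{2}<1$ excludes $1$ on the orthogonal complement, the eigenvalue $1$ is automatically simple.

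The main obstacle I anticipate is the strict positivity $\mu>0$ for every $x\in E_0$: the first-- and second--order conditions alone are compatible with $\mu=0$. The global maximizer supplied at the outset satisfies $\mu=f(x_\ast)>0$ and lies in $E_0$, and the odd symmetry $f(-x)=-f(x)$ together with the third--order term in the cubic expansion along a great circle through a putative $x\in E_0$ with $x^2=0$ should force such a point to be a local \emph{minimum} rather than a maximum. Making that third--order argument airtight is the step I would be most careful about.
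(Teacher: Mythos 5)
Everything you actually complete is correct and is essentially the paper's own argument: the paper likewise gets existence from compactness together with the oddness of the cubic form (nonvanishing being checked by polarization, as in your $T(a,b,ab)=|ab|^2>0$), derives $x^2=\langle x^2,x\rangle x$ from Lagrange multipliers, and obtains \eqref{extremal} from second-order information — concretely, by expanding $\langle x_t^2,x_t\rangle$ for $x_t=x\cos t+y\sin t$ with $y\perp x$, using $\langle x^2,y\rangle=0$, dividing by $\sin^2 t$ and letting $t\to0$, which is exactly your Hessian inequality $\langle L_xh,h\rangle\le\tfrac{\mu}{2}|h|^2$ on $x^\perp$; the simplicity of the eigenvalue $1$ is read off in the same way in both arguments.

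The step you flag is, however, a genuine gap, and the third-order repair you sketch cannot work, because the clause ``$x\in E_0\Rightarrow\langle x^2,x\rangle>0$'' fails for arbitrary local maxima. Take $V=\mathbb{R}^2$ with orthonormal basis $u,v$ and the commutative multiplication $u^2=0$, $uv=-v$, $v^2=-u$; the associated trilinear form $\langle xy,z\rangle$ is totally symmetric, so the standard (positive definite) inner product is associative and $VV\ne0$. On the unit circle $f(u\cos\theta+v\sin\theta)=-3\cos\theta\sin^2\theta$, so $u$ is a constrained stationary point and a \emph{strict local maximizer} with $f(u)=0$ and $u^2=0$. The reason your plan cannot exclude this is that the second-order term is already strictly negative there ($3\langle u,v^2\rangle=-3$), so the point is a genuine local maximum and the third-order/odd-symmetry term never comes into play. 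Note that the paper's own proof has the same limitation: it establishes positivity of $f$ only at points where the maximum is attained and then tacitly divides by $\langle x,x^2\rangle$. The correct repair is therefore to prove the final clause for global maximizers (equivalently, for $x\in E_0$ with $f(x)>0$), which is all that is needed in the sequel — extremal idempotents are defined via the global maximum — and for such $x$ your own existence step already supplies $\mu=f(x)>0$, after which the rest of your argument closes without change.
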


\begin{proof}
By the positive definiteness of $\scal{\cdot}{\cdot}$, the unit sphere $S=\{x\in V:\scal{x}{x}=1\}$  is compact in the standard Euclidean norm topology on $V$ induced by $|x|^2$.
Since $VV\ne 0$, the cubic form $f(x)=\half16\scal{x^2}{x}\not\equiv0$ (this can be easily seen by polarization of $f$). Since $f$ is a continuous odd function on $S$, $f(x)$ attains its \textit{positive} maximum value there. In particular $E_0$ (and, thus, $E$) is nonempty.

First let $x\in E$ be a stationary point. Then
Lagrange’s undetermined multipliers method yields for any $y\in V$ and $\scal{x}{y}=0$ that the directional derivative $\partial_y f|_{x}$ vanishes:
$$
0=\partial_y f|_{x}=\half{1}{2}\scal{x^2}{y}.
$$
Since the inner product is nonsingular, this implies that $x$ and $x^2$ are parallel, therefore $x^2=\lambda x$ with $\lambda=\scal{x^2}{x}$. Note that $x^2=0$ if and only if $\scal{x^2}{x}=0$, otherwise scaling $x$ appropriately yields that $c:=x/\scal{x^2}{x}$ is  a nonzero idempotent.

Next, to prove \eqref{extremal}, let us consider an arbitrary $x\in E_0$, $|x|=1$, and define $x_t=x\cos t +y\sin t$, where $y\in V$ such that $\scal{x}{y}=0$, $|y|=1$ and $t\in \R{}$. Since $|x_t|=|x|=1$, we obtain by the extremal property of $x_0$ that
\begin{equation}\label{minimi}
\scal{x_t^2}{x_t}\le \scal{x_0^2}{x_0}= \scal{x^2}{x}.
\end{equation}
Using the associativity of the inner product we obtain
\begin{align*}
\scal{x_t^2}{x_t}&=\scal{x^2\cos^2 t +2xy\cos t\sin t+y^2\cos^2 t}{x\cos t +y\sin t}\\
&=\cos^3 t \scal{x^2}{x}+3\cos^2t\sin t\scal{x^2}{y}+3\cos t\sin^2 t\scal{x}{y^2}+\sin^3t\scal{y^2}{y}
\end{align*}
then using the fact that $\scal{x^2}{y}=\scal{\lambda x}{y}=0$, we find from \eqref{minimi}
\begin{align*}
3\cos t\sin^2 t\scal{x}{y^2}+\sin^3t\scal{y^2}{y}\le (1-\cos^3 t) \scal{x^2}{x},
\end{align*}
Dividing the obtained inequality by $\sin^2 t$ and passing to limit as $t\to 0$ yields
\begin{align*}
3\scal{x}{y^2}\le \half32\scal{x^2}{x}.
\end{align*}
Now, recall that $c=x/\scal{x}{x^2}$ is an extremal idempotent, hence
$$
\scal{y}{L_cy}=\scal{y}{cy}=\scal{c}{y^2}\le \half12=\half12|y|^2
$$
Thus, the latter inequality holds for all $y\bot c$ and $|y|=1$, implying by the self-adjointness of $L_c$ that $L_c\le \half12$ on $c^\bot$, as desired.
\end{proof}

We shall also need the following generalization.

\begin{corollary}
\label{cor:w}
Let $V$ be as in \Cref{pro:main} and let $V=U\oplus W$ be an orthogonal decomposition, where $U,W$ are nontrivial subspaces of $V$. If $UU\ne 0$ (i.e. $U$ is not zero subalgebra of $V$) then there exists $u\in U$, such that $u\ne 0$ and
$$
u^2=\lambda u+w, \quad \text{where }\,\,\lambda\in \R{}, \,\,\,w\in W.
$$
\end{corollary}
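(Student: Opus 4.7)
The plan is to apply the variational / Lagrange multiplier machinery from the proof of \Cref{pro:main}, but restricted to the subspace $U$ rather than to all of $V$. Concretely, I would consider the cubic form $g(x):=\scal{x^2}{x}$ restricted to the unit sphere $S_U:=\{x\in U:|x|^2=1\}$ of the subspace $U$. Since $U$ is finite-dimensional and the restriction of $\scal{\cdot}{\cdot}$ to $U$ is still positive definite, $S_U$ is compact and nonempty (as $U$ is nontrivial), so $g$ attains a maximum on $S_U$ at some point $u$; automatically $u\ne 0$ because $|u|=1$.

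Next I would derive the stationary condition by expanding $g(u+ty)$ for arbitrary $y\in U$ with $\scal{u}{y}=0$. Using commutativity of the product and associativity of the form exactly as in \Cref{pro:main}, one obtains
\[
g(u+ty)=\scal{u^2}{u}+3t\scal{u^2}{y}+3t^2\scal{u}{y^2}+t^3\scal{y^2}{y},
\]
so $\partial_y g|_u=3\scal{u^2}{y}$, and extremality forces this to vanish for every tangent direction $y\in U$ with $\scal{u}{y}=0$. Consequently the orthogonal projection of $u^2$ onto $U$ is a scalar multiple of $u$: there exists $\lambda\in\R{}$ with $u^2-\lambda u\perp U$. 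Because the decomposition $V=U\oplus W$ is orthogonal we have $U^{\bot}=W$, so setting $w:=u^2-\lambda u\in W$ delivers the desired identity $u^2=\lambda u+w$.

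A small point worth flagging is where the hypothesis $UU\ne 0$ is actually used. It is not needed to produce the maximizer $u$ itself; compactness of $S_U$ alone is enough. Its role is rather to ensure that the conclusion is substantive, since if $UU=0$ then any $u\in U$ trivially satisfies $u^2=0=0\cdot u+0$ and the corollary is vacuous. I therefore do not anticipate any real obstacle in the proof: the whole argument is a direct transcription of the first half of the proof of \Cref{pro:main}, with $U$ playing the role of $V$ and the orthogonal complement $W$ absorbing whatever part of $u^2$ spills out of $U$.
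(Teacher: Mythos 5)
Your proof is correct and follows essentially the same route as the paper: maximize the cubic form $\scal{x^2}{x}$ on the unit sphere of $U$ and use the first-order (Lagrange) condition to conclude that the $U$-component of $u^2$ is a scalar multiple of $u$, so that $u^2-\lambda u\in U^\bot=W$. The only minor divergence is your (accurate) remark that $UU\ne 0$ is not actually needed for the stationarity argument, whereas the paper invokes it to guarantee a positive maximum as in \Cref{pro:main} -- a fact not required for the stated conclusion.
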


\begin{proof}
If $\dim U=1$ then the claim is trivial, therefore assume that $\dim U\ge 2$. Consider the variational problem of maximizing of the cubic form $f(x):=\scal{x^2}{x}$ under two conditions: $|x|^2=1$ and $x\in U$. If $U$ is a nonzero subalgebra of $V$ then $f(x)\not\equiv 0$ on $U$, hence arguing as in \Cref{pro:main} we conclude that the (positive) maximum attains at some $u\in U$,  $|u|=1$. Therefore for any $y\in U\cap u^\bot$ we have
$$
0=\partial_y f|_{u}=\half{1}{2}\scal{u^2}{y}.
$$
The latter implies that
$$
u^2\in (U\cap u^\bot)^\bot=\mathrm{span}(W \cup u),
$$
hence there exists $\lambda\in \R{}$ such that $u^2-\lambda u\in W$, the desired conclusion follows.
\end{proof}

\subsection{Variational properties}

It follows from \Cref{pro:main} that the set of nonzero idempotents is nonempty for any Euclidean metrised algebra. Moreover, any idempotent arising as in \Cref{pro:main} has distinguished spectral properties.

\label{sec:dim2}
\begin{definition}
An idempotent $c\in V$ is called \textit{extremal}, or $c\in \Idm_1(V)$, if the function $\scal{x^2}{x}$ attains its global maximum value at $c$ for all $x\in V$ satisfying $|x|=|c|$.

\end{definition}

It follows from the definition that if $c$ is an extremal idempotent then
\begin{equation}\label{ineq}
\scal{x}{x^2}\le \frac{1}{|c|}|x|^3, \qquad \forall x\in V,
\end{equation}
and the equality holds for $x=c$. Furthermore, all extremal idempotent have the same length. It also follows from \eqref{ineq} that
\begin{equation}\label{ineqcc}
|c|\le |c'|, \qquad \forall c\in \Idm_1(V),\,\, \forall c'\in \Idm(V),
\end{equation}
i.e. the extremal idempotents have the minimal possible squared length.

\begin{remark}\label{rem:jordan}
It is well-known, see for example \cite{FKbook}, that if $J$ is an Euclidean Jordan algebra equipped with the trace form $\scal{x}{y}=\trace(xy)$ then for any idempotent $c\in \Idm(J)$: $|c|^2=n\in \mathbb{Z}^+$, i.e. the squared length takes only positive integer values, and an idempotent $c\in J$ is primitive if and only it has minimal possible squared length. Furthermore, if $J$ is a spin factor (a Jordan algebra associated with symmetric bilinear form) then all (nonzero) idempotents have the same square length $|c|^2=1$.
\end{remark}

Taking into account the previous remark, it is convenient to scale  the inner product  such that all extremal idempotents has the unit length. To this end, note that if $\scal{\cdot}{\cdot}$ is an associative positive definite bilinear form on  $V$ then so also is $k\scal{\cdot}{\cdot}$ for any $k>0$. We have  the following definition.

\begin{definition}
Let $V$ be a Euclidean metrised algebra with an associative inner product $\scal{\cdot}{\cdot}$. The inner product is said to be \textit{normalized} if
\begin{equation}\label{ineq1}
\scal{x}{x^2}\le |x|^3
\end{equation}
holds for all $x\in V$ and the equality holds for some $x\ne 0$. By abuse of terminology, we call $V$ normalized if its inner product is so.
\end{definition}

In other words, the inner product is normalized if and only if any extremal idempotent has length $|c|=1$. Therefore, in an normalized algebra
$$
\Idm_1(V)=\{c\in V: c^2=c \quad \text{and}\quad |c|^2=1\}.
$$

As a corollary of \eqref{ineqcc},
\begin{equation}\label{ineq2}
|c|\ge 1, \quad \forall c\in \Idm(V).
\end{equation}
If the inner product normalized,  \Cref{pro:main} can be reformulated as follows.

\begin{proposition}
The equality in \eqref{ineq1} is obtained for $x\ne0$ if and only if $x/|x|\in \Idm_1(V)$. If $c\in \Idm_1(V)$ then $L_c\le \frac12$ on $c^\bot$.
\end{proposition}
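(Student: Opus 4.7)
My plan is to deduce both assertions as direct consequences of \Cref{pro:main} combined with the normalization condition. I would first exploit the cubic homogeneity of $f(x):=\scal{x^2}{x}$: dividing \eqref{ineq1} by $|x|^3$ reveals that, for any nonzero $x$, the inequality is equivalent to $f(u)\le 1$ at the unit vector $u=x/|x|$, and equality in \eqref{ineq1} corresponds to $u$ globally maximizing $f$ on the unit sphere $S=\{|x|=1\}$. The normalization hypothesis says exactly that this global maximum on $S$ exists and equals $1$.

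Next I would establish the equivalence by matching each side with membership in the set $E_0$ of global maximizers of $f$ on $S$ from \Cref{pro:main}. For the forward direction, if $u=x/|x|$ attains $f(u)=1$ then $u\in E_0$, and \Cref{pro:main} gives that $u/\scal{u^2}{u}=u/1=u$ is an idempotent, which by construction maximizes $f$ on the sphere of its own radius $|u|=1$ and therefore lies in $\Idm_1(V)$ by the definition of extremal. For the converse, starting from $c\in \Idm_1(V)$ I would use $|c|^2=\scal{c}{c}=\scal{c^2}{c}=f(c)$, then combine $f(c)\le |c|^3$ (the normalization inequality rescaled off $S$) with the fact that $c$ actually realises this bound (being extremal by definition on the sphere of its own radius) to force $|c|=1$; hence $c\in S$ and $f(c)=1$, i.e.\ equality in \eqref{ineq1} at $x=c$.

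The second assertion is then immediate: the previous paragraph places every $c\in \Idm_1(V)$ into $E_0$, so \Cref{pro:main} yields $L_c\le \half12$ on $c^\bot$ with no additional work.

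I do not foresee any real obstacle; the content is essentially a repackaging of \Cref{pro:main}. The only point requiring care is conceptual bookkeeping: the definition of an extremal idempotent is a global-maximum property on the sphere of radius $|c|$, whereas \Cref{pro:main} produces the set $E_0$ on the unit sphere. The cubic homogeneity of $f$ invoked in the first step is precisely what bridges these two scales and makes the translation between the two formulations transparent.
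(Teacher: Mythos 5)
Your argument is correct and is exactly the repackaging the paper intends: the paper states this proposition without a separate proof, presenting it as an immediate reformulation of \Cref{pro:main} under the normalization hypothesis, and your homogeneity bookkeeping (equality in \eqref{ineq1} $\Leftrightarrow$ $x/|x|$ maximizes $\scal{x^2}{x}$ on the unit sphere, then $\scal{c^2}{c}=|c|^2$ forces $|c|=1$ for extremal $c$) supplies precisely the omitted details. The only cosmetic slip is calling $E_0$ the set of global maximizers, whereas the paper defines it as local maxima; since global maximizers are local maxima, your use of \Cref{pro:main} to get $L_c\le\frac12$ on $c^\bot$ is unaffected.
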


\subsection{Unital algebras}
Recall that an algebra is called unital if there exists $e\in V$ such that $ex=xe=x$ for all $x$. If a unit exists then it is necessarily  unique and distinct from $0$.

\begin{proposition}\label{pro:1}
If $V$ is a  unital Euclidean metrised algebra and $\dim V\ge 2$ then there exist at least two different idempotents in $V$ distinct from the unit.
\end{proposition}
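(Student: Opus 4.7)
The plan is to exhibit two distinct nonzero idempotents, both different from $e$, by combining \Cref{pro:main} with the involution $c\mapsto \bar c:=e-c$ from \eqref{barc}. I first observe that this involution sends $\{c\in \Idm(V):c\ne 0,\,c\ne e\}$ to itself: if $c$ is a nonzero idempotent with $c\ne e$, then \eqref{barc} shows that $\bar c$ is an idempotent, $\bar c\ne 0$ (because $c\ne e$), and $\bar c\ne e$ (because $c\ne 0$). Moreover $c\ne\bar c$, since $c=\bar c$ would force $e=2c$ and therefore $c^2=\frac{1}{4}e=\frac{1}{2}c$; combined with $c^2=c$ this yields $c=0$, a contradiction. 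Hence it suffices to produce a single idempotent $c\in\Idm(V)$ with $c\ne 0$ and $c\ne e$.

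To produce such a $c$, I would apply \Cref{pro:main} to obtain an extremal idempotent $c\in\Idm_1(V)$, which is automatically nonzero. The only genuine step left is to rule out $c=e$, and this is where the hypothesis $\dim V\ge 2$ enters. Suppose, for contradiction, that $c=e$. Then the spectral inequality \eqref{extremal} would say $L_e\le \frac12$ on $e^\bot$; but $L_e=\mathrm{Id}$, so this forces $|x|^2\le \frac12|x|^2$ for every $x\in e^\bot$, i.e.\ $e^\bot=\{0\}$, and hence $\dim V=1$, contradicting the hypothesis.

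Combining the two paragraphs, $c$ and $\bar c=e-c$ are the required pair of distinct idempotents, both different from $e$. The main (and in fact only) obstacle is the elimination of the case $c=e$; once this is done by the spectral bound \eqref{extremal} together with $\dim V\ge 2$, everything else is a direct consequence of \eqref{barc} and elementary algebra.
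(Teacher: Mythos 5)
Your proposal is correct and follows essentially the same route as the paper: take an extremal idempotent $c$ from \Cref{pro:main}, use the spectral bound $L_c\le\tfrac12$ on $c^\bot$ (versus $L_e\equiv 1$) together with $\dim V\ge 2$ to rule out $c=e$, and then pair $c$ with $\bar c=e-c$, checking $c\ne\bar c$ by the same ``$e=2c$ is impossible'' computation. The only cosmetic difference is that you phrase the exclusion of $c=e$ as a contradiction via $e^\bot=\{0\}$, while the paper notes directly that $L_e\equiv 1$ cannot satisfy the bound on the nontrivial subspace $c^\bot$; the content is identical.
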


\begin{proof}
First note  that a unital algebra is obviously nonzero, because $ee=e\ne 0$, where $e\in V$ is the algebra unit. Therefore by \Cref{pro:main} there exists a (nonzero) extremal idempotent $c\in V$. By the extremal property, one also has $L_c\le \half12$ on $c^\bot$. Note that the subspace $c^\bot$ is nontrivial because by the assumption $\dim c^\bot=\dim V-1\ge 1$. The algebra unit  $e$ is an idempotent and $e\ne c$ because $L_e\equiv 1$ on $V$. By \eqref{barc}, $\bar c=e-c$ is also an idempotent and $c\ne e-c$ (because otherwise $e=2c$, hence $e=e^2=4c=2e$,  implying a contradiction). This proves the proposition.
\end{proof}

In the rest of this section we assume that $V$ is a unital algebra and $\dim V\ge 2$. Given an arbitrary idempotent $c\in \Idm(V)$, let $\bar c=e-c$ denote its conjugate. Then
$$
c\bar c=c(e-c)=c-c=0,
$$
and it follows from $c\bar c=0$ and $c$ and $\bar c$ are orthogonal, see \eqref{orthc}.

\smallskip
Recall that an idempotent distinct from $0$ and $e$ is called \textit{nontrivial}.
\smallskip

By the above, $\Idm_1(V)\ne \emptyset$. If $c\in \Idm_1(V)$ then $c^\bot$ is nontrivial, hence $c\ne e$ (since $L_e=1$ on the whole $V$). We have by the orthogonality and \eqref{ineq2}
\begin{equation}\label{unit2}
|e|^2=|c|^2+|\bar c|^2\ge 2.
\end{equation}
In the next section we classify all algebras where the equality in \eqref{unit2} is obtained.

\subsection{Minimal algebras}
For convenience reasons, we shall assume in the rest of the paper that $V$ is a normalized algebra. Then rephrasing the \Cref{def:1} yields that a unital normalized algebra  is \textit{minimal} if and only if
\begin{equation}\label{ee2}
|e|^2=2.
\end{equation}
Furthermore, specializing \eqref{ec} for   \eqref{ee2}  yields
\begin{equation}\label{ec1}
\scal{e}{c}=1, \quad \forall c\in \Idm_1(V).
\end{equation}
Since $|e|^2=2>1$, the unit $e$ is not an extremal idempotent. The next proposition shows that it is the only (distinct from zero)  non-extremal idempotent.

\begin{proposition}
\label{pro:norm2}
Let $V$ be a minimal algebra. Then all nonzero idempotents in $V$, except for the unit, are extremal:
$$
\Idm(V)=\Idm_1(V)\cup \{e\}.
$$
Furthermore, if $c\in \Idm_1(V)$ then
\begin{equation}\label{Lc12}
L_c=\half12 \quad\text{ on} \,\,c^\bot\cap \bar c^\bot.
\end{equation}
\end{proposition}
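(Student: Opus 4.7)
The plan is to exploit the orthogonality $\scal{c}{\bar c}=0$ together with the minimality $|e|^2=2$ in order to squeeze both the lengths of nonzero idempotents and the spectrum of $L_c$.

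For the first identity $\Idm(V)=\Idm_1(V)\cup\{e\}$, I would take an arbitrary nonzero idempotent $c'\ne e$ and pass to its conjugate $\bar{c'}=e-c'$, which by \eqref{barc} is again a nonzero idempotent (nonzero precisely because $c'\ne e$). The orthogonality \eqref{orthc} combined with the normalization \eqref{ee2} yields $|c'|^2+|\bar{c'}|^2=|e|^2=2$, and then the length lower bound \eqref{ineq2}, applied to both $c'$ and $\bar{c'}$, forces $|c'|^2=|\bar{c'}|^2=1$. Hence $c'\in\Idm_1(V)$, which establishes the nontrivial inclusion $\Idm(V)\setminus\{e\}\subseteq\Idm_1(V)$; the reverse inclusion is immediate.

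For the operator identity $L_c=\frac{1}{2}$ on $W:=c^\bot\cap\bar c^\bot$, the previous paragraph already yields, as a byproduct, that $c\in\Idm_1(V)$ implies $\bar c\in\Idm_1(V)$. The normalized reformulation of \Cref{pro:main} therefore supplies both spectral bounds $L_c\le\frac{1}{2}$ on $c^\bot$ and $L_{\bar c}\le\frac{1}{2}$ on $\bar c^\bot$. Before combining them I would check that $W$ is $L_c$-invariant: for $y\in W$, the associativity \eqref{selff} together with $c^2=c$ and $c\bar c=0$ gives $\scal{c}{L_cy}=\scal{c}{y}=0$ and $\scal{\bar c}{L_cy}=\scal{c\bar c}{y}=0$, so indeed $L_cW\subseteq W$ (and symmetrically for $L_{\bar c}$). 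Using $L_c+L_{\bar c}=L_e=\mathrm{id}$, every $y\in W$ then satisfies the squeeze $|y|^2=\scal{L_cy}{y}+\scal{L_{\bar c}y}{y}\le\frac{1}{2}|y|^2+\frac{1}{2}|y|^2=|y|^2$, so equality must hold throughout and $\scal{L_cy}{y}=\frac{1}{2}|y|^2$ for all $y\in W$. By self-adjointness of $L_c$ this equality of quadratic forms lifts to the operator identity $L_c|_W=\frac{1}{2}\,\mathrm{id}_W$.

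I do not expect a genuine obstacle: once $\bar c\in\Idm_1(V)$ is extracted from the minimality assumption, the two spectral bounds combined with $L_c+L_{\bar c}=\mathrm{id}$ make the pinching argument essentially forced. The only mildly technical point is the verification of the $L_c$-invariance of $W$, which is a two-line consequence of the associativity of the bilinear form.
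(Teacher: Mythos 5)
Your proof is correct and follows essentially the same route as the paper: the length squeeze $2=|e|^2=|c'|^2+|\bar c'|^2\ge 2$ for the first claim, and for \eqref{Lc12} the pinching of $L_c$ between the two extremal bounds for $c$ and $\bar c=e-c$ via $L_c+L_{\bar c}=\mathrm{id}$. Your version is in fact slightly more careful than the paper's (which states the operator inequalities $\half12-\bar c\otimes\bar c\le L_c\le \half12+c\otimes c$ and concludes directly), since you verify the $L_c$-invariance of $c^\bot\cap\bar c^\bot$ and pass from the quadratic-form equality to $L_c|_{c^\bot\cap\bar c^\bot}=\half12\,\mathrm{id}$ by self-adjointness and polarization.
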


\begin{proof}
Let $c\in \Idm(V)$, $0\ne c\ne e$. Then
$$
2=|e|^2=|\bar c|^2+|c|^2\ge 2,
$$
therefore $|c|^2=1$, which implies that $\Idm(V)=\Idm_1(V)\cup \{e\}$. Next, if $c\in \Idm_1(V)$ then $\bar c\in \Idm_1(V)$ too. By the extremal property,
$$
L_c\le \half12 + c\otimes c, \quad
L_{\bar c}\le \half12 + \bar c\otimes \bar c.
$$
Since $L_{\bar c}=1-L_c$, this yields
$$
\half12 - \bar c\otimes \bar c\le L_c\le \half12 + c\otimes c
$$
which implies that $L_c=\half12$ on $c^\bot\cap \bar c^\bot$, as desired.
\end{proof}

The identity \eqref{Lc12} shows that the multiplication by an extremal idempotent is essentially $\half12$. This implies that the multiplication structure on a minimal algebra is quite special. More precisely we have

\begin{proposition}
If $V$ is a normalized  minimal algebra then for any $c_1,c_2\in \Idm_1(V)$ there holds
\begin{equation}\label{prod1}
2c_1c_2=c_1+c_2-\scal{\bar c_1}{c_2}e.
\end{equation}
\end{proposition}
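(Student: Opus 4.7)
The plan is to exploit the orthogonal decomposition of $V$ induced by $c_1$ and its conjugate. Since $c_1\in \Idm_1(V)$, we have $\bar c_1\in \Idm_1(V)$ as well, and by orthogonality $V$ splits as
\begin{equation*}
V=\R{}c_1\oplus \R{}\bar c_1\oplus W,\qquad W:=c_1^\bot\cap \bar c_1^\bot,
\end{equation*}
because $|c_1|^2=|\bar c_1|^2=1$ and $\scal{c_1}{\bar c_1}=0$.

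First I would expand $c_2$ in this decomposition as $c_2=\alpha c_1+\beta\bar c_1+w$ with $w\in W$. Pairing with $c_1$ and $\bar c_1$ gives $\alpha=\scal{c_1}{c_2}$ and $\beta=\scal{\bar c_1}{c_2}$; pairing with $e=c_1+\bar c_1$ together with the identity \eqref{ec1} yields the key constraint $\alpha+\beta=\scal{e}{c_2}=1$.

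Next I would compute $c_1c_2$ term by term. Using $c_1^2=c_1$, the orthogonality relation $c_1\bar c_1=0$ from \eqref{barc}, and the spectral identity \eqref{Lc12} of \Cref{pro:norm2} which says $L_{c_1}=\half12$ on $W$, I obtain
\begin{equation*}
c_1c_2=\alpha c_1+\tfrac{1}{2}w,\qquad \text{hence}\qquad 2c_1c_2=2\alpha c_1+w.
\end{equation*}
On the other hand, substituting $e=c_1+\bar c_1$ into the claimed right-hand side gives
\begin{equation*}
c_1+c_2-\beta e=(1+\alpha-\beta)c_1+w,
\end{equation*}
and using $\alpha+\beta=1$ the coefficient $1+\alpha-\beta$ collapses to $2\alpha$, matching $2c_1c_2$.

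There is no real obstacle here: everything is forced once one recognizes that the component of $c_2$ orthogonal to $\mathrm{span}(c_1,\bar c_1)$ lies in the $\half12$-eigenspace of $L_{c_1}$ guaranteed by \Cref{pro:norm2}. The only small point worth stating carefully is the constraint $\alpha+\beta=1$, which relies on the minimality hypothesis through \eqref{ec1}; the rest is a routine linear computation in a three-term orthogonal decomposition.
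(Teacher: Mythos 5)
Your proposal is correct and follows essentially the same route as the paper: decompose $c_2$ in the orthogonal splitting $\R{}c_1\oplus\R{}\bar c_1\oplus(c_1^\bot\cap\bar c_1^\bot)$, use \eqref{Lc12} from \Cref{pro:norm2} to multiply the $W$-component by $\half12$, and invoke $\scal{e}{c_2}=1$ (i.e.\ \eqref{ec1}) to balance the coefficients. The only cosmetic difference is that you verify the identity by comparing coordinates of both sides, while the paper substitutes $z=c_2-\scal{c_1}{c_2}c_1-\scal{\bar c_1}{c_2}\bar c_1$ back and simplifies directly; your treatment also covers the cases $c_2=c_1$, $c_2=\bar c_1$ uniformly, which the paper sets aside as trivial.
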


\begin{proof}
First note that for any idempotent $c\in \Idm(V)$ there holds
\begin{equation}\label{ec}
\scal{e}{c}=\scal{e}{cc}=\scal{ec}{c}=|c|^2,
\end{equation}
hence if $c_1,c_2\in \Idm(V)$ then
\begin{equation}\label{conju}
\scal{c_1}{\bar c_2}=\scal{c_1}{e-c_2}=\scal{c_1}{e}-\scal{c_1}{c_2}=|c_1|^2-\scal{c_1}{c_2}.
\end{equation}

Next note that \eqref{prod1} trivially holds if $c_2=c_1$ or $c_2=\bar c_1$, therefore we assume that $c_2$ is  distinct from $c_1$ and $\bar c_1$. Let us decompose $c_2$ as
$$
c_2=\scal{c_1}{c_2}c_1+\scal{\bar c_1}{c_2}\bar c_1+z, \quad z\in c_1^\bot\cap {\bar c}_1^\bot.
$$
Since $L_{c_1}=\half12$ on $c_1^\bot\cap {\bar c}_1^\bot$ we have by virtue of \eqref{conju}
\begin{align*}
c_1c_2&=\scal{c_1}{c_2}c_1+\half12z=
\scal{c_1}{c_2}c_1+\half12(c_2-\scal{c_1}{c_2}c_1-\scal{\bar c_1}{c_2}\bar c_1)\\
&=\half12c_2+\half12\scal{c_1}{c_2}c_1-\half12\scal{\bar c_1}{c_2}(e-c_1)=\half12c_2+\half12c_1-\half12\scal{\bar c_1}{c_2}e,
\end{align*}
as desired.
\end{proof}

\begin{corollary}\label{cor:sub}
Let $S\subset \Idm_1(V)$ be such that $e\in \mathrm{span} (S)$, then $\mathrm{span}(S)$ is a subalgebra of $V$.
\end{corollary}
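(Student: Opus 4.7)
The plan is to apply the explicit multiplication formula \eqref{prod1} from the preceding proposition, which is what makes this a near-immediate corollary.

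First, I would note that since multiplication in $V$ is $\R{}$-bilinear, to verify that a linear subspace $W\subset V$ is a subalgebra it is enough to check closure under multiplication on a spanning set of $W$. In our setting $W=\mathrm{span}(S)$ and $S$ itself is a spanning set, so it suffices to show that $c_1c_2\in \mathrm{span}(S)$ for any two extremal idempotents $c_1,c_2\in S$.

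Next, I would invoke \eqref{prod1}, which gives
\begin{equation*}
c_1c_2=\tfrac12 c_1+\tfrac12 c_2-\tfrac12\scal{\bar c_1}{c_2}\,e.
\end{equation*}
The first two summands lie in $\mathrm{span}(S)$ trivially because $c_1,c_2\in S$. The third summand is a scalar multiple of $e$, and by the hypothesis $e\in \mathrm{span}(S)$, so it also lies in $\mathrm{span}(S)$. Therefore $c_1c_2\in \mathrm{span}(S)$, and bilinearity finishes the argument.

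There is no real obstacle here: the substantive content is packaged entirely in the multiplication identity \eqref{prod1}, and the role of the hypothesis $e\in \mathrm{span}(S)$ is precisely to absorb the stray $-\tfrac12\scal{\bar c_1}{c_2}e$ term that would otherwise push the product outside $\mathrm{span}(S)$. It is worth remarking (though not strictly needed for the proof) that this hypothesis cannot be dropped: the formula shows that without $e$ one only obtains $\mathrm{span}(S\cup\{e\})$ as the ambient subalgebra, not $\mathrm{span}(S)$ itself.
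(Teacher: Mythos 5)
Your proof is correct and is exactly the argument the paper intends (the paper's own proof just says ``follows readily from \eqref{prod1}''): by bilinearity one only needs products of elements of $S$, and \eqref{prod1} expresses $c_1c_2$ as a combination of $c_1$, $c_2$ and $e$, all of which lie in $\mathrm{span}(S)$ by hypothesis. The closing remark about the role of $e$ is a nice touch and consistent with the paper.
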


\begin{proof}
Follows readily from \eqref{prod1}.
\end{proof}

\section{The proof of the main results}\label{sec:last}
First we establishes \eqref{thspann} in \Cref{th:main}.

\begin{proposition}
\label{th:span}
If $V$ is a minimal algebra then $
V=\mathrm{span}(\Idm_1(V)).
$
\end{proposition}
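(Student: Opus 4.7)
The plan is to set $W := \mathrm{span}(\Idm_1(V))$ and $U := W^\bot$, and derive a contradiction from $U \ne \{0\}$ by producing an idempotent forbidden by \Cref{pro:norm2}. First I describe the action of $W$ on $U$. Because $\bar c = e - c \in \Idm_1(V)$ for every $c \in \Idm_1(V)$, the unit $e = c + \bar c$ lies in $W$, and any $u \in U$ is orthogonal to both $c$ and $\bar c$; hence \Cref{pro:norm2} gives $cu = \tfrac12 u$, which extends linearly to $wu = \tfrac12\scal{w}{e}\,u$ for every $w \in W$.

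Next I locate the product $uu'$ for $u, u' \in U$. By associativity, $\scal{uu'}{c} = \scal{u}{cu'} = \tfrac12\scal{u}{u'}$ for every $c \in \Idm_1(V)$. Since this quantity does not depend on $c$, the $W$-component of $uu'$ is orthogonal within $W$ to every difference $c - c'$. Writing $c = \tfrac12 e + \tfrac12(c - \bar c)$ exhibits an orthogonal decomposition $W = \R{}e \oplus W_0$ with $W_0 := \mathrm{span}\{c - c' : c, c' \in \Idm_1(V)\} \subset e^\bot$, so $(uu')_W \in \R{}e$, and testing against $c$ forces
\begin{equation*}
uu' = \tfrac12\scal{u}{u'}\,e + g(u, u'), \qquad g(u, u') \in U.
\end{equation*}

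Now I select a direction $u \in U$ whose square lies in $\R{}e + \R{}u$. If the cubic $x \mapsto \scal{x^2}{x}$ vanishes on $U$, polarization gives $g \equiv 0$ and every unit $u \in U$ satisfies $u^2 = \tfrac12 e$; put $\lambda := 0$. Otherwise \Cref{cor:w}, applied to the orthogonal decomposition $V = U \oplus W$, yields a unit $u \in U$ with $u^2 = \lambda u + w$, $w \in W$, $\lambda > 0$; matching with the identity above forces $w = \tfrac12 e$, so in both cases $u^2 = \lambda u + \tfrac12 e$ with $\lambda \ge 0$. Inside the two-dimensional subalgebra $\R{}e + \R{}u$ I then solve $c^2 = c$ for $c = \alpha e + \beta u$ with $\beta \ne 0$: the system reduces to $2\alpha + \beta\lambda = 1$ and $\alpha^2 + \tfrac{\beta^2}{2} = \alpha$, leading to $|c|^2 = 1 \pm \lambda/\sqrt{\lambda^2 + 2}$. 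For $\lambda = 0$ this gives $c = \tfrac12 e \pm \tfrac{1}{\sqrt 2}u$, extremal by $|c|^2 = 1$, but with nonzero $U$-component, contradicting $\Idm_1(V) \subset W$. For $\lambda > 0$ one gets $|c|^2 \in (0,2) \setminus \{1\}$, so $c$ is a nonzero idempotent which is neither extremal nor equal to $e$, contradicting \Cref{pro:norm2}.

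The main obstacle is the second step: extracting from the single identity $\scal{uu'}{c} = \tfrac12\scal{u}{u'}$, valid for every $c \in \Idm_1(V)$, the conclusion that the $W$-component of $uu'$ is a multiple of $e$ alone. Once this alignment with $e$ is established, the existence of a good direction $u$ via \Cref{cor:w} and the two-variable idempotent calculation are routine, and the rigid length dichotomy of \Cref{pro:norm2} closes the argument.
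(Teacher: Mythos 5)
Your proof is correct and follows essentially the same route as the paper: span the extremal idempotents, use \Cref{pro:norm2} to show that squares of vectors in $W^\bot$ have $W$-component $\tfrac12|z|^2e$, invoke \Cref{cor:w} to get $u^2=\lambda u+\tfrac12 e$, and produce an idempotent $\alpha e+\beta u$ with $\beta\ne0$ that contradicts $\Idm(V)=\Idm_1(V)\cup\{e\}$. Your variations (the polarized identity for $uu'$, the split into the cases where the cubic vanishes or not on $U$, and the final length dichotomy $|c|^2=1$ versus $|c|^2\ne1$) are only minor repackagings of the paper's argument, which instead notes $z^2\ne0$ to apply \Cref{cor:w} directly and concludes uniformly from the fact that every nonzero idempotent other than $e$ lies in $W$.
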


\begin{proof}
Without loss of generality we may assume that $V$ is a normalized minimal algebra. Recall that $\Idm_1(V)$ is nonempty by \Cref{pro:main}. Define
$$
W:=\mathrm{span}(\Idm_1(V)).
$$
Then $W$ is a subalgebra by Corollary~\ref{cor:sub}. Since $c+\bar c=e$ for any $c\in \Idm_1(V)$, we have $e\in W$.
Assume by contradiction that  $W\ne V$, hence $V=W\oplus W^\bot$ with $W^\bot\ne 0$. Since $WW\subset W$, the associativity of the inner product implies that $WW^\bot\subset W^\bot$. Given an arbitrary $z\in W^\bot$, let us consider the orthogonal decomposition
\begin{equation}\label{xyz}
z^2=x+y, \quad x\in W, y\in W^\bot.
\end{equation}
Let $c\in \Idm_1(V)$ be chosen arbitrarily. By Proposition~\ref{pro:norm2},
\begin{equation}\label{halff}
L_c=\half12\quad  \text{ on }\quad W^\bot,
\end{equation}
hence $$
\scal{z^2}{c}=\scal{z}{zc}=\half12|z|^2,
$$
therefore it follows  from \eqref{xyz} that $\scal{x}{c}=\half12|z|^2$. 
Let $x_0=x-\half12|z|^2e$. Then $x_0\in W$ and using \eqref{ec1} we obtain
$$
\scal{x_0}{c}=\scal{x}{c}-\half12|z|^2\scal{e}{c} =\half12|z|^2-\half12|z|^2=0,
$$
which by virtue of the arbitrariness of $c\in \Idm(V)$ and the definition of $W$ implies that $x_0\in W^\bot$. Therefore $x_0\in W^\bot\cap W$, i.e. $x_0=0$. This proves that for any $z\in W^\bot$
\begin{equation}\label{xyz1}
z^2=\half12|z|^2e+y, \quad y\in W^\bot.
\end{equation}
In particular, $z^2\ne0$ if $z\ne 0$, hence $W^\bot W^\bot\ne 0$. This implies by Corollary~\ref{cor:w} that there exists a nonzero vector $z\in W^\bot$ such that
\begin{equation}\label{xyz2}
z^2=\half12|z|^2e+\lambda z,\quad \lambda\in \R{}.
\end{equation}
With this $z$ in hand, we claim that the idempotent equation $p^2=p$ for $p=ae+bz$ with $a,b$ being some real numbers has a  solution distinct from $e$. Note that we may assume that $b\ne 0$, because otherwise $p=e$. Then expanding $p^2=p$ by virtue of  \eqref{xyz2} yields the system
$$
\half12b^2|z|^2+a^2=a \quad \text{and}\quad 2ab+\lambda b^2=b.
$$
From the second equation we have $\lambda b=1-2a$, therefore
$$
\half12b^2|z|^2=a-a^2=\half14-(a-\half12)^2=\half14-\half14\lambda^2b^2.
$$
This yields $b^2=1/(2|z|^2+\lambda^2)$. Since the latter equation is solvable, this proves that there exists an idempotent $p=ae+bz\in \Idm(V)$ with $b\ne 0$, i.e. $p\ne e$. Therefore by \Cref{pro:norm2}, $p\in \Idm_1(V)\subset W$, which  obviously contradicts to the assumption that $0\ne z\in W^\bot$. This proves that assumption $W^\bot\ne0$ is wrong, thus $W=V$. The proposition follows.
\end{proof}

Now we are ready to finish the proof of \Cref{th:main}.
\begin{proposition}
\label{cor:idem}
If $V$ is a minimal algebra then $V$ is a Jordan algebra of the symmetric bilinear form
$$
f(x,y)=\frac{1}{|e|^2}\scal{x}{y}.
$$
\end{proposition}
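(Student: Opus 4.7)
The plan is to exhibit an explicit isomorphism $\phi:\R{} \oplus e^\bot \to V$ given by $\phi(a\oplus u)=ae+u$ and verify that it respects the multiplication laws. Since $\scal{\cdot}{\cdot}$ is positive definite and $|e|^2=2$ in the normalized setting, the decomposition $V=\R{} e\oplus e^\bot$ is an orthogonal direct sum, so $\phi$ is a linear bijection. Comparing the Jordan multiplication \eqref{jordanmul} with
$$
(ae+u)(be+v)=abe+av+bu+uv,
$$
the whole problem reduces to proving the single identity
\begin{equation}\label{plan:key}
uv = f(u,v)\,e \qquad \forall u,v\in e^\bot,
\end{equation}
i.e.\ that $e^\bot \cdot e^\bot \subset \R{} e$ and the projection onto $e$ is exactly $f(u,v)e=\half12\scal{u}{v}e$.

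The main step is to verify \eqref{plan:key} on a spanning set. For any $c\in \Idm_1(V)$, write $u_c:=c-\half12 e$. A direct computation gives $\scal{u_c}{e}=\scal{c}{e}-\half12|e|^2=1-1=0$, so $u_c\in e^\bot$. Using Proposition~\ref{th:span}, $V=\mathrm{span}(\Idm_1(V))$, and applying the orthogonal projection $\pi(x)=x-\half12\scal{x}{e}e$ to both sides shows that $e^\bot=\mathrm{span}\{u_c:c\in\Idm_1(V)\}$. Hence by bilinearity it suffices to prove \eqref{plan:key} for $u=u_{c_1}$, $v=u_{c_2}$ with $c_1,c_2\in\Idm_1(V)$. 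Expanding $c_1c_2=(\half12 e+u_{c_1})(\half12 e+u_{c_2})$ and substituting the product formula \eqref{prod1} together with the identity $\scal{c_1}{c_2}=\half12+\scal{u_{c_1}}{u_{c_2}}$ (which follows from $c_i=\half12 e+u_{c_i}$ and $|e|^2=2$), both sides become linear in $u_{c_1}, u_{c_2}$ plus a scalar multiple of $e$, and matching coefficients yields exactly $u_{c_1}u_{c_2}=\half12\scal{u_{c_1}}{u_{c_2}}e=f(u_{c_1},u_{c_2})e$.

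Once \eqref{plan:key} is established, the proof concludes by a short verification: for any $a\oplus u,b\oplus v\in \R{}\oplus e^\bot$,
$$
\phi(a\oplus u)\,\phi(b\oplus v)=abe+av+bu+uv=(ab+f(u,v))e+(av+bu)=\phi\bigl((a\oplus u)\bullet(b\oplus v)\bigr),
$$
so $\phi$ is an algebra isomorphism from $e^\bot_f$ onto $V$. I expect no genuine obstacle; the only subtlety is the careful bookkeeping in the single computation that reduces \eqref{prod1} to the Clifford-type multiplication on $e^\bot$, and the use of Proposition~\ref{th:span} to guarantee that the identities proved on the $u_c$ extend to all of $e^\bot$.
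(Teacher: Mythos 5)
Your proposal is correct and follows essentially the same route as the paper: your vectors $u_c=c-\half12 e$ are (up to a factor $2$) the paper's $e_i=2c_i-e$, your identity $u_{c_1}u_{c_2}=f(u_{c_1},u_{c_2})e$ is exactly the paper's computation $e_ie_j=(1-2\scal{\bar c_i}{c_j})e$ derived from \eqref{prod1}, and $\phi(a\oplus u)=ae+u$ is the same isomorphism used there; your version merely bypasses the intermediate quadratic relation \eqref{quadr0} and the commutator/Jordan-identity check, which is legitimate because the isomorphism with $e^\bot_f$ already yields the Jordan property. The only point to add is the paper's final one-line rescaling reducing an arbitrary minimal algebra to the normalized case $|e|^2=2$, which you implicitly assume; since both the multiplication and $f=\frac{1}{|e|^2}\scal{\cdot}{\cdot}$ are unchanged when the inner product is rescaled, this is immediate.
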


\begin{proof}
First let $V$ be a normalized minimal algebra.
Note that  \eqref{thspann} readily implies  that there exists a basis in $V$ consisting of $e$ and idempotents with the unit norm. Let $\{e,c_1,\ldots,c_n\}$ be such a basis, where $n+1=\dim V\ge2$. It is easy to see that if $c_i$ is in the basis then $\bar c_i=e-c_i$ is not. This implies that the new set $\{e,e_1,\ldots,e_n\}$ with $e_i=c_i-\bar c_i=2c_i-e$, $1\le i\le n$, is also an basis of $V$.

Let $x\in V$ and let
\begin{equation}\label{xdecom}
x=a_0e+\sum_{i=1}^na_ie_i
\end{equation}
be the corresponding basis decomposition. By \eqref{ec1}
$$
\scal{e}{e_i}=\scal{e}{c-\bar c}=1-1=0
$$
for all $i\ge 1$, hence
$$
a_0=\frac{\scal{x}{e}}{\scal{e}{e}}=\frac12 \scal{x}{e}.
$$
Note also that for all $i\ge 1$
\begin{equation}\label{square}
e_i^2=4c-4c+e=e,
\end{equation}
and also by virtue of \eqref{prod1}
\begin{equation}\label{square1}
e_ie_j=4c_ic_j-2c_i-2c_j+e=(1-2\scal{\bar c_i}{c_j})e, \quad 1\le i,j\le n.
\end{equation}
Rewrite  \eqref{xdecom} as $x=\frac12 \scal{x}{e}e+y$, where $y=\sum_{i=1}^na_ie_i$. Since $y$ is orthogonal to $e$ we find that $|x|^2=\frac12 \scal{x}{e}^2+|y|^2$. Also from \eqref{square} and \eqref{square1} follows that $y^2=\lambda e$ for some $\lambda$, hence
$$
y^2=\frac{\scal{y^2}{e}}{\scal{e}{e}}e=\frac{|y|^2}{2}=\frac12(|x|^2-\half12 \scal{x}{e}^2)e.
$$
Since $(x-a_0e)^2=y^2,$ we obtain
$$
x^2-2a_0x+a_0^2e=\frac12(|x|^2-\half12 \scal{x}{e}^2)e.
$$
implying
\begin{equation}\label{quadr0}
x^2-\scal{x}{e}x+d(x)e=0,
\end{equation}
where
$$
d(x):=\half12(\scal{x}{e}^2-|x|^2).
$$
In particular, \eqref{quadr0} implies for the commutator
$$
[L_{x^2},L_x]=[L_{\scal{x}{e}x-b(x)e},L_x] =\scal{x}{e}[L_{x},L_x]-b(x)[1,L_x]=0
$$
implying that $V$ is a Jordan algebra. Polarizing \eqref{quadr0} yields the following explicit expression for the algebra multiplication:
\begin{equation}\label{explicitm}
xy=\half12\bigl(\scal{x}{e}y+\scal{y}{e}x -(\scal{x}{e}\scal{y}{e}-\scal{x}{y})e\bigr)
\end{equation}

It remains to establish an isomorphism between $V$ and the Jordan algebra $e^\bot_f$, where $f(x,y)=\half12\scal{x}{y}$ (recall that in a normalized minimal algebra $|c|^2=2$). To this end, note that $e^\bot_f=\R{}\oplus e^\bot$ with the product given by \eqref{jordanmul}, i.e. the present notation
$$
(a\oplus u)\bullet (b\oplus v)=(ab+\half12\scal{u}{v})\oplus (av+bu), \qquad u,v\in e^\bot,\,\, a,b\in \R{}.
$$
Let us define a homomorphism of vector spaces
$$
\phi(z)=a e+u:e^\bot_f\to V, \quad \text{where }\,\,z=a\oplus u.
$$
Then for $z=a\oplus u$ and $w=b\oplus v$ we have
$$
\phi(z\bullet w)=
(ab+\half12\scal{u}{v})+ (av+bu).
$$
On the other hand, using
$$
\scal{\phi(z)}{e}=2a, \quad
\scal{\phi(w)}{e}=2b, \quad
\scal{\phi(z)}{\phi(w)}=2ab+\scal{u}{v}, \quad
$$
and applying \eqref{explicitm} we find
\begin{align*}
\phi(z)\phi(w)&=\half12\bigl(2a\phi(w)+2b\phi(z)-(2ab-\scal{u}{v})e\bigr)\\
&=av+bu+(ab+\half12\scal{u}{v})e\\
&=\phi(z\bullet w),
\end{align*}
hence $\phi$ is an algebra isomorphism.

Finally, let $V$ be an arbitrary minimal algebra with inner product $\scal{\cdot}{\cdot}$. Define the new inner product by
$$
\scal{x}{y}_1=\frac{2\scal{x}{y}}{\scal{e}{e}},
$$
so that $\scal{e}{e}_1=2$. Then $(V,\scal{\cdot}{\cdot}_1)$ is obviously a \textit{normalized} minimal algebra. Thus
$$
V\cong e^{\bot}_{f}, \quad \text{where} \quad f=\half12 \scal{\cdot}{\cdot}_1=\frac{\scal{\cdot}{\cdot}}{\scal{e}{e}},
$$
as desired. The proposition is proved completely.
\end{proof}


\bibliographystyle{plain}

\def\cprime{$'$} \def\cprime{$'$}

\end{document}